\documentclass[12pt,twoside]{amsart}

\setcounter{page}{1}

\setlength{\textheight}{19cm}

\setlength{\textwidth}{13.5cm}

\setlength{\oddsidemargin}{1cm}

\setlength{\evensidemargin}{1cm}

\pagestyle{myheadings}

\thispagestyle{empty}

\markboth{\small{Bivas Dinda, Tuhin Bera and T.K. Samanta}}{\small{Generalised intuitionistic fuzzy soft sets}}

\date{}

\begin{document}

\centerline {\Large{\bf Generalised intuitionistic fuzzy soft sets and its }}
\centerline {\Large{\bf application in decision making}}
\centerline{}

\newcommand{\mvec}[1]{\mbox{\bfseries\itshape #1}}

\centerline{\bf {Bivas Dinda, Tuhin Bera and T.K. Samanta}}

\centerline{}

\newtheorem{Theorem}{\quad Theorem}[section]

\newtheorem{definition}[Theorem]{\quad Definition}

\newtheorem{theorem}[Theorem]{\quad Theorem}

\newtheorem{remark}[Theorem]{\quad Remark}

\newtheorem{corollary}[Theorem]{\quad Corollary}

\newtheorem{note}[Theorem]{\quad Note}

\newtheorem{lemma}[Theorem]{\quad Lemma}

\newtheorem{example}[Theorem]{\quad Example}

\newtheorem{proposition}[Theorem]{\quad Proposition}

\centerline{}
\centerline{\bf Abstract}
{\emph{In this paper, generalised intuitionistic fuzzy soft sets and relations on generalised intuitionistic fuzzy soft sets are defined and a few of their properties are studied. An application of generalised intuitionistic fuzzy soft sets in decision making with respect to degree of preference is investigated.}}\\

{\bf Keywords:}  \emph{Soft sets, fuzzy soft sets, intuitionistic fuzzy soft sets, generalised intuitionistic fuzzy soft sets, decision making.}\\
\textit{\bf 2010 Mathematics Subject Classification:} 06D72.
\section{\bf Introduction}
In real life situation, most of the problems in economics, social science, medical science, environment etc. have various uncertainties. However, most of the existing mathematical tools for formal modeling, reasoning and computing are crisp, deterministic and precise in character. There are theories viz. theory of probability, evidence, fuzzy set, intuitionistic fuzzy set, vague set, interval mathematics, rough set for dealing with uncertainties. These theories have their own difficulties as pointed out by Molodtsov\cite{Molodtsov}.\\
In 1999, Molodtsov\cite{Molodtsov} initiated a novel concept of soft set theory, which is completely new approach for modeling vagueness and uncertainties. Soft set theory has a rich potential for application in solving practical problems in economics, social science, medical science etc. Later on Maji et al. \cite{Maji2,Maji,Maji1} have studied the theory of fuzzy soft set and intuitionistic fuzzy soft set. Majumder and Samanta \cite{sksamanta} have generalised the concept of fuzzy soft set as introduced by Maji et al. \cite{Maji2}.\\
As a generalisation of fuzzy soft set theory, intuitionistic fuzzy set theory makes description of the objective world more realistic, practical and accurate in some cases, making it very promising. The motivation of the present paper is to further generalise the concept of Majumder and Samanta \cite{sksamanta}. \\
In this paper, we have introduced generalised intuitionistic fuzzy soft set. Our definition is more realistic since it contains a degree of preference corresponding to each parameter. Relations on generalised intuitionistic fuzzy soft sets  are defined and a few of its properties are studied. An application of generalised intuitionistic fuzzy soft set in decision making is presented.

\section{\bf Preliminaries}
\begin{definition}\cite{Molodtsov}
Let $U$ be an initial universe set and $E$ be the set of parameters. Let $P(U)$ denotes the power set of $U$. A pair $(F,E)$ is called a soft set over $U$ where $F$ is a mapping given by $F:E\rightarrow P(U)$.
\end{definition}

\begin{definition}\cite{Maji2}
Let $U$ be an initial universe set and $E$ be the set of parameters. Let $A \subset E$. A pair $(F,A)$ is called fuzzy soft set over $U$ where $F$
 is a mapping given by $F : A\rightarrow I^U  $, where $I^U$ denotes the collection of all fuzzy subsets of $U$.
 \end{definition}

\begin{definition}\cite{sksamanta}
Let $U=\{x_1,x_2,\cdots,x_n\}$ be the universal set of elements and $E=\{e_1,e_2,\cdots,e_m\}$ be the universal set of parameters. The pair $(U,E)$ will be called a soft universe. Let $F:E\rightarrow I^U$ and $\mu$ be a fuzzy subset of $E,i.e. \;\mu:E\rightarrow I=[0,1],$ where $I^U$ is the collection of all fuzzy subset of $U.$ Let $F_{\mu}$ be a mapping $F_{\mu}:E\rightarrow I^U\times I$ defined as follows:\\
$F_{\mu}(e)=(F(e),\mu(e))$, where $F(e)\in I^{U}.$ Then $F_{\mu}$ is called generalised fuzzy soft set over the soft universe $(U,E).$
 \end{definition}
Here for each parameter $e_i,\;F_{\mu}(e_i)$ indicates not only the degree of belongingness of the elements of $U$ in $F(e_i)$ but also the degree of possibility of such belongingness which is represented by $\mu(e_i)$.

\begin{definition}\cite{Maji}
Let $U$ be an initial universe set and $E$ be the set of parameters. Let $IF^U$ denotes the collection of all intuitionistic fuzzy subsets of $U$. Let $A\subseteq E$. A pair $(F,\,A)$ is called intuitionistic fuzzy soft set over $U$, where $F$ is a mapping given by $F:\,A\rightarrow \,IF^U.$
\end{definition}

\begin{example}
Consider the following example:\\
Let $(F,A)$ describes the the character of the students with respect to the given parameters, for finding the best student of an academic year. Let the set of students under consideration is $U=\{s_1,s_2,s_3,s_4\}.$ Let $A\subseteq E$ and $A=\{r="result",\,c="conduct",\,g="games\; and\; sports\; performances"\}$. Let\\
$F(r)=\left\{(s_1,0.8,0.1),(s_2,0.9,0.05),(s_3,0.85,0.1),(s_4,0.75,0.2)\right\}\\$
$F(c)=\left\{(s_1,0.6,0.3),(s_2,0.65,0.2),(s_3,0.7,0.2),(s_4,0.65,0.2)\right\}\\$
$F(g)=\left\{(s_1,0.75,0.2),(s_2,0.5,0.3),(s_3,0.5,0.4),(s_4,0.7,0.2)\right\}\\$
Then the family $\{F(r),F(c),F(g)\}$ of $IF^U$ is an intuitionistic fuzzy soft set.
\end{example}

\begin{definition}\cite{Maji}
Intrersection of two intuitionistic fuzzy soft sets $(F,A)$ and $(G,B)$ over a common universe $U$ is the intuitionistic fuzzy soft set $(H,C)$ where $C=A\cap B,$ and $\forall \epsilon\in C,\;H(e)=F(e)\cap G(e).$ We write $(F,A)\tilde{\cap}(G,B)=(H,C).$
\end{definition}

\begin{definition}\cite{Maji}
Union of two intuitionistic fuzzy soft sets $(F,A)$ and $(G,B)$ over a common universe $U$ is the intuitionistic fuzzy soft set $(H,C)$ where $C=A\cup B,$ and $\forall \epsilon\in C,$
\[H(e)=F(e),\hspace{3 cm}if\; e\in A-B\]
\[=G(e),\hspace{3 cm}if\;e\in B-A\hspace{-1 cm}\]
\[=F(e)\cup G(e),\hspace{1.75 cm}if\; e\in A\cap B\hspace{-1 cm}\]
We write $(F,A)\tilde{\cup}(G,B)=(H,C).$
\end{definition}

\begin{definition}\cite{Maji}
For two intuitionistic fuzzy soft sets $(F,A)$ and $(G,B)$ over a common universe $U$, we say that $(F,A)$ is an intuitionistic fuzzy soft subset of $(G,B)$ if\\
(i) $A\subset B,$ and\\
(ii) $\forall \epsilon\in A,\;F(\epsilon)$ is an intuitionistic fuzzy subset of $G(\epsilon).\\$
We write $(F,A)\tilde{\subset}(G,B).$
\end{definition}

\begin{definition}
\cite{Schweizer} A binary operation \, $\ast \; : \; [\,0 \; , \;
1\,] \; \times \; [\,0 \; , \; 1\,] \;\, \rightarrow \;\, [\,0
\; , \; 1\,]$ \, is continuous \, $t$ - norm if \,$\ast$\, satisfies
the
following conditions \, $:$ \\
(i)  $\ast$ \, is commutative and associative ,\\
(ii) $\ast$ \, is continuous , \\
(iii) $a \;\ast\;1 \;\,=\;\, a \hspace{1.2cm}
\forall \;\; a \;\; \in \;\; [\,0 \;,\; 1\,]$ , \\
(iv) $a \;\ast\; b \;\, \leq \;\, c \;\ast\; d$
\, whenever \, $a \;\leq\; c$  ,  $b \;\leq\; d$  and  $a \,
, \, b \, , \, c \, , \, d \;\, \in \;\;[\,0 \;,\; 1\,]$.
\end{definition}
A few examples of continuous t-norm are $\,a\,\ast\,b\,=\,ab,\;\,a\,\ast\,b\,=\,min\{a,b\},\;\,a\,\ast\,b\,=\,max\{a+b-1,0\}$.

\begin{definition}
\cite{Schweizer}. A binary operation \, $\diamond \; : \; [\,0 \; ,
\; 1\,] \; \times \; [\,0 \; , \; 1\,] \;\, \rightarrow \;\,
[\,0 \; , \; 1\,]$ \, is continuous \, $t$-conorm if
\,$\diamond$\, satisfies the
following conditions \, $:$ \\
(i)  $\diamond$ \, is commutative and
associative ,\\
(ii)  $\diamond$ \, is continuous , \\
(iii) $a \;\diamond\;0 \;\,=\;\, a
\hspace{1.2cm}
\forall \;\; a \;\; \in\;\; [\,0 \;,\; 1\,]$ , \\
(iv) $a \;\diamond\; b \;\, \leq \;\, c
\;\diamond\; d$ \, whenever \, $a \;\leq\; c$  ,  $b \;\leq\; d$
 and  $a \, , \, b \, , \, c \, , \, d \;\; \in\;\;[\,0
\;,\; 1\,].$
\end{definition}
A few examples of continuous t-conorm are $\,a\,\diamond\,b\,=\,a+b-ab,\;\,a\,\diamond\,b\,=\,max\{a,b\},\;\,a\,\diamond\,b\,=\,min\{a+b,1\}$.\\

\section{\bf Generalised intuitionistic fuzzy soft sets}
Throughout the text, unless otherwise stated explicitly, $U$ be the set of universe and $E$ be the set of parameters and we take $A,B,C\subseteq E$ and $\alpha,\beta,\delta$ are fuzzy subset of $A,B,C$ respectively.

\begin{definition}
Let $U$ be the universal set and $E$ be the set of parameters. Let $A\subseteq E$ and $\mathcal{F}:A\rightarrow IF^{U}$ and $\alpha$ be a fuzzy subset of $A$ i.e., $\alpha:A\rightarrow [0,1]$, where $IF^{U}$ is the collection of all intuitionistic fuzzy subset of $U$. Let $\mathcal{F}_\alpha:A\rightarrow IF^{U}\times [0,1]$ be a function defined as follows:\\
\[\mathcal{F}_\alpha (a)\,=\,\left(\mathcal{F}(a)=\{x,\,\mu_{_{\mathcal{F}(a)}}(x),\,\nu_{_{\mathcal{F}(a)}}(x)\},\;\alpha(a)\right)\]
where $\mu,\nu$ denotes the degree of membership and degree of non-membership. \\ Then $\mathcal{F}_\alpha $ is called a Generalised intuitionistic fuzzy soft set over $(U,E).$
\end{definition}
Here for each parameter $e_i,\;\mathcal{F}_\alpha (e_i) $ indicates not only degree of belongingness of the elements of $U$ in $\mathcal{F}(a)$ but also degree of preference of such belongingness which is represented by $\alpha(e_i).$

\begin{example}\label{e1}
Let $U=\{s_1,s_2,s_3,s_4\}$ be the set of students under consideration for the best student of an academic year with respect to the given parameters $A\subseteq E$ and $A=\{r="result",\,c="conduct",\,g="games \;and\; sports\; performances"\}$. Let $\alpha:A\rightarrow [0,1]$ be given as follows: $\alpha(r)=0.7,\;\alpha(c)=0.5,\;\alpha(g)=0.6\\$
We define $\mathcal{F}_\alpha$ as follows:\\
$\mathcal{F}_\alpha(r)=\left(\{(s_1,0.8,0.1),(s_2,0.9,0.05),(s_3,0.85,0.1),(s_4,0.75,0.2)\},\;0.7\right)\\$
$\mathcal{F}_\alpha(c)=\left(\{(s_1,0.6,0.3),(s_2,0.65,0.2),(s_3,0.7,0.2),(s_4,0.65,0.2)\},\;0.5\right)\\$
$\mathcal{F}_\alpha(g)=\left(\{(s_1,0.75,0.2),(s_2,0.5,0.3),(s_3,0.5,0.4),(s_4,0.7,0.2)\},\;0.6\right)\\$
Then $\mathcal{F}_\alpha$ is an generalised intuitionistic fuzzy soft set.
\end{example}

\begin{definition}
Let $\mathcal{F}_\alpha$ and $\mathcal{G}_\beta$ be two generalised intuitionistic fuzzy soft set over $(U,E).$ Now $\mathcal{F}_\alpha$ is called a generalised intuitionistic fuzzy soft subset of $\mathcal{G}_\beta$ if\\
(i) $\alpha$ is a fuzzy subset of $\beta,$\\
(ii) $A\subseteq B,\\$
(iii) $\forall a\in A,\; \mathcal{F}(a)$ is an intuitionistic fuzzy subset of $\mathcal{G}(a)$ i.e., $\mu_{_{\mathcal{F}(a)}}(x)\leq \mu_{_{\mathcal{G}(a)}}(x)$ and $\nu_{_{\mathcal{F}(a)}}(x)\geq \nu_{_{\mathcal{G}(a)}}(x)\;\;\;\forall x\in U$ and $a\in A.\\$
We write $\mathcal{F}_\alpha\tilde{\subseteq}\mathcal{G}_\beta.$
\end{definition}

\begin{example}\label{e2}
Let $\mathcal{G}_\beta$ be a generalised intuitionistic fuzzy soft set defined as follows:\\
$\mathcal{G}_\beta(r)=\left(\{(s_1,0.85,0.05),(s_2,0.9,0.025),(s_3,0.9,0.1),(s_4,0.8,0.1)\},\;0.75\right)\\$
$\mathcal{G}_\beta(c)=\left(\{(s_1,0.7,0.2),(s_2,0.7,0.15),(s_3,0.75,0.2),(s_4,0.65,0.15)\},\;0.6\right)\\$
$\mathcal{G}_\beta(g)=\left(\{(s_1,0.8,0.2),(s_2,0.6,0.3),(s_3,0.7,0.2),(s_4,0.7,0.1)\},\;0.65\right)\\$
and consider the generalised intuitionistic fuzzy soft set $\mathcal{F}_\alpha$ given in Example \ref{e1}. Then $\mathcal{F}_\alpha$ is a generalised intuitionistic fuzzy soft subset of $\mathcal{G}_\beta$.
\end{example}

\begin{definition}
The intersection of two generalised intuitionistic fuzzy soft sets $\mathcal{F}_\alpha$ and $\mathcal{G}_\beta$ is denoted by $\mathcal{F}_\alpha \tilde{\cap}\,\mathcal{G}_\beta$ and defined by a generalised intuitionistic fuzzy soft set $\mathcal{H}_\delta:A\cap B\rightarrow IF^{U}\times [0,1]$ such that for each $e\in A\cap B$ and $x\in U\\$
\[\mathcal{H}_\delta (e)=\left(\{x,\,\mu_{_{\mathcal{H}(e)}}(x),\,\nu_{_{\mathcal{H}(e)}}(x)\},\;\delta(e)\right)\]
where $\;\;\mu_{_{\mathcal{H}(e)}}(x)=\mu_{_{\mathcal{F}(e)}}(x)\ast \mu_{_{\mathcal{G}(e)}}(x),\;\;\; \nu_{_{\mathcal{H}(e)}}(x)=\nu_{_{\mathcal{F}(e)}}(x)\diamond \nu_{_{\mathcal{G}(e)}}(x),\;\;\;\delta(e)=\alpha(e)\ast\beta(e).$
\end{definition}

\begin{definition}
The union of two generalised intuitionistic fuzzy soft sets $\mathcal{F}_\alpha$ and $\mathcal{G}_\beta$ is denoted by $\mathcal{F}_\alpha \tilde{\cup}\,\mathcal{G}_\beta$ and defined by a generalised intuitionistic fuzzy soft set $\mathcal{H}_\delta:A\cup B\rightarrow IF^{U}\times [0,1]$ such that for each $e\in A\cup B$ and $x\in U$
\[\mathcal{H}_\delta (e)=\left(\{x,\,\mu_{_{\mathcal{F}(e)}}(x),\,\nu_{_{\mathcal{F}(e)}}(x)\},\;\alpha(e)\right)\;\;\;if \;e\in A-B\]
\[=\left(\{x,\,\mu_{_{\mathcal{G}(e)}}(x),\,\nu_{_{\mathcal{G}(e)}}(x)\},\;\beta(e)\right)\;\;\;if \;e\in B-A\hspace{-1 cm}\]
\[=\left(\{x,\,\mu_{_{\mathcal{H}(e)}}(x),\,\nu_{_{\mathcal{H}(e)}}(x)\},\;\delta(e)\right)\;\;\;if \;e\in A\cap B\hspace{-1 cm}\]
where $\;\;\mu_{_{\mathcal{H}(e)}}(x)=\mu_{_{\mathcal{F}(e)}}(x)\diamond \mu_{_{\mathcal{G}(e)}}(x),\;\;\; \nu_{_{\mathcal{H}(e)}}(x)=\nu_{_{\mathcal{F}(e)}}(x)\ast \nu_{_{\mathcal{G}(e)}}(x),\;\;\;\delta(e)=\alpha(e)\diamond\beta(e).$
\end{definition}

\begin{example}
Let us consider the generalised intuitionistic fuzzy soft sets $\mathcal{F}_\alpha$ and $\mathcal{G}_\beta$ defined in Example \ref{e1} and \ref{e2} respectively. Let us define the $t$-norm $\ast$ the $t$-conorm $\diamond$ as follows: $a\ast b=ab$ and $a\diamond b=a+b-ab$. Then \\
$(\mathcal{F}_\alpha \tilde{\cup}\,\mathcal{G}_\beta)(e_1)=(\{(s_1,0.97,0.005),(s_2,0.99,0.00125),(s_3,0.985,0.01),\\(s_4,0.95,0.02)\}\,,\;0.68\,)\\$
$(\mathcal{F}_\alpha \tilde{\cup}\,\mathcal{G}_\beta)(e_2)=(\{(s_1,0.88,0.06),(s_2,0.895,0.03), (s_3,0.925,0.04),\\(s_4,0.8775,0.1625)\}\,,\;0.1625\,)\\$
$(\mathcal{F}_\alpha \tilde{\cup}\,\mathcal{G}_\beta)(e_3)=(\{(s_1,0.95,0.04),(s_2,0.8,0.09),(s_3,0.85,0.08),\\(s_4,0.91,0.02)\}\,,\;0.86\,).\\$
Since $\{r,c,g\}\in A\cap B,\\$
$(\mathcal{F}_\alpha \tilde{\cap}\,\mathcal{G}_\beta)(e_1)=(\{(s_1,0.68,0.145),(s_2,0.81,0.07375),(s_3,0.765,0.19),\\(s_4,0.6,0.28)\}\,,\;0.12\,)\\$
$(\mathcal{F}_\alpha \tilde{\cap}\,\mathcal{G}_\beta)(e_2)=(\{(s_1,0.42,0.44),(s_2,0.455,0.32), (s_3,0.525,0.36),\\(s_4,0.4225,0.7375)\}\,,\;0.375\,)\\$
$(\mathcal{F}_\alpha \tilde{\cap}\,\mathcal{G}_\beta)(e_3)=(\{(s_1,0.6,0.36),(s_2,0.3,0.5),(s_3,0.35,0.52),\\(s_4,0.49,0.28)\}\,,\;0.39\,).$
\end{example}

\begin{theorem}
Let $\mathcal{F}_\alpha$, $\mathcal{G}_\beta$ and $\mathcal{H}_\delta$ be any three generalised intuitionistic fuzzy soft sets over $(U,E),$ then the following holds:\\
(i) $\mathcal{F}_\alpha\tilde{\cup}\,\mathcal{G}_\beta=\mathcal{G}_\beta\tilde{\cup}\,\mathcal{F}_\alpha.\\$
(ii) $\mathcal{F}_\alpha\tilde{\cap}\,\mathcal{G}_\beta=\mathcal{G}_\beta\tilde{\cap}\,\mathcal{F}_\alpha.\\$
(iii) $\mathcal{F}_\alpha\tilde{\cup}\,(\mathcal{G}_\beta \tilde{\cup}\,\mathcal{H}_\delta)=(\mathcal{F}_\alpha\tilde{\cup}\,\mathcal{G}_\beta)\tilde{\cup}\,\mathcal{H}_\delta .\\$
(iv) $\mathcal{F}_\alpha\tilde{\cap}\, (\mathcal{G}_\beta\tilde{\cap}\,\mathcal{H}_\delta)=(\mathcal{F}_\alpha\tilde{\cap}\,\mathcal{G}_\beta)\tilde{\cap}\,\mathcal{H}_\delta .$
\end{theorem}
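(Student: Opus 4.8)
The plan is to reduce every one of the four identities to the corresponding algebraic property of the continuous $t$-norm $\ast$ and $t$-conorm $\diamond$ that define $\tilde{\cup}$ and $\tilde{\cap}$, applied separately to the three ingredients of a generalised intuitionistic fuzzy soft set: the membership function $\mu$, the non-membership function $\nu$, and the preference parameter. Since two such objects coincide exactly when they have the same parameter domain and agree parameterwise on all three ingredients, it is enough to verify these coordinatewise.

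For (i) and (ii) I would first observe that the index sets already match, since $A\cup B=B\cup A$ and $A\cap B=B\cap A$, and that the three-branch case split in the definition of $\tilde{\cup}$ treats $A-B$ and $B-A$ symmetrically. On the overlap $A\cap B$ one then invokes commutativity: for $\tilde{\cup}$ we have $\mu_{\mathcal{F}(e)}(x)\diamond\mu_{\mathcal{G}(e)}(x)=\mu_{\mathcal{G}(e)}(x)\diamond\mu_{\mathcal{F}(e)}(x)$, $\nu_{\mathcal{F}(e)}(x)\ast\nu_{\mathcal{G}(e)}(x)=\nu_{\mathcal{G}(e)}(x)\ast\nu_{\mathcal{F}(e)}(x)$, and $\alpha(e)\diamond\beta(e)=\beta(e)\diamond\alpha(e)$; for $\tilde{\cap}$ the roles of $\ast$ and $\diamond$ are interchanged. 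This yields (i) and (ii) at once.

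For (iii) and (iv) the domains again agree, $A\cup(B\cup C)=(A\cup B)\cup C$ and $A\cap(B\cap C)=(A\cap B)\cap C$. For the intersection (iv), on every $e$ in the common domain $A\cap B\cap C$ both sides produce the membership value obtained by bracketing $\mu_{\mathcal{F}(e)}(x)\ast\mu_{\mathcal{G}(e)}(x)\ast\mu_{\mathcal{H}(e)}(x)$, the non-membership value bracketing $\nu_{\mathcal{F}(e)}(x)\diamond\nu_{\mathcal{G}(e)}(x)\diamond\nu_{\mathcal{H}(e)}(x)$, and the preference $\alpha(e)\ast\beta(e)\ast\delta(e)$, so associativity of $\ast$ and of $\diamond$ forces the two bracketings to coincide. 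For the union (iii) the same associativity argument settles the full-overlap case $A\cap B\cap C$, but one must also run through the remaining patterns according to which of $A,B,C$ contain $e$ (the seven nonempty subsets), checking in each that both associations collapse to the same value — for instance if $e\in A\cap B$ and $e\notin C$ both sides give $\big(\mu_{\mathcal{F}(e)}(x)\diamond\mu_{\mathcal{G}(e)}(x),\,\nu_{\mathcal{F}(e)}(x)\ast\nu_{\mathcal{G}(e)}(x),\,\alpha(e)\diamond\beta(e)\big)$, and analogously for the other cases.

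The only genuine hurdle is this case enumeration for the union in (iii): nesting the three-branch definition of $\tilde{\cup}$ inside itself generates several subcases, and one has to confirm that belonging to, say, $B-A$ as opposed to $A\cap B$ is handled consistently under both associations. Beyond that, each verification is a one-line appeal to the laws satisfied by $\ast$ and $\diamond$, so I anticipate no conceptual difficulty, only the routine bookkeeping of the cases.
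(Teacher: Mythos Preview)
Your proposal is correct and follows essentially the same approach as the paper: the paper's proof is the single observation that the $t$-norm and $t$-conorm are commutative and associative, from which the theorem follows. Your version simply spells out the coordinatewise verification and the parameter-set case analysis that this one-line justification leaves implicit.
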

\begin{proof}
 Since the $t$-norm function and $t$-conorm functions are commutative and associative, therefore the theorem follows.
\end{proof}

\begin{remark}
Let $\mathcal{F}_\alpha$, $\mathcal{G}_\beta$ and $\mathcal{H}_\delta$ be any three generalised intuitionistic fuzzy soft sets over $(U,E)$. If we consider $a\ast b=\min\{a,\,b\}$ and $a\diamond b=\max\{a,\,b\}$ then the following holds:\\
(i) $\mathcal{F}_\alpha\tilde{\cap}\,(\mathcal{G}_\beta \tilde{\cup}\,\mathcal{H}_\delta)=(\mathcal{F}_\alpha\tilde{\cap}\,\mathcal{G}_\beta)\tilde{\cup}\,(\mathcal{F}_\alpha\tilde{\cap}\,\mathcal{H}_\delta) \\$
(ii) $\mathcal{F}_\alpha\tilde{\cup}\,(\mathcal{G}_\beta \tilde{\cap}\,\mathcal{H}_\delta)=(\mathcal{F}_\alpha\tilde{\cup}\,\mathcal{G}_\beta)\tilde{\cap}\,(\mathcal{F}_\alpha\tilde{\cup}\,\mathcal{H}_\delta) .\\$
But in general above relations does not hold.
\end{remark}

\section{\bf Relation on generalised intuitionistic fuzzy soft sets}
\begin{definition}
Let $\mathcal{F_\alpha}$ and $\mathcal{G_\beta}$ be two generalised intuitionistic fuzzy soft set over $(U,E).$ Then generalised intuitionistic fuzzy soft relation (in short GIFSR) $R$ from $\mathcal{F_\alpha}$ to $\mathcal{G_\beta}$ is a function $R:A\times B\rightarrow IF^U\times [0,1]$ defined by
\[R(a,b)\tilde{\subseteq}\mathcal{F_\alpha}(a)\tilde{\cap} \mathcal{G_\beta}(b)\;\;\;\;\forall (a,b)\in A\times B. \]
\end{definition}

\begin{definition}
Let $R_1,\,R_2$ be two GIFSR from $\mathcal{F_\alpha}$ to $\mathcal{G_\beta}$. Then $R_1\cup R_2,\;R_1\cap R_2,\;R_1^{-1}$ are defined as follows:\\
$(R_1\cup R_2)(a,b)=\max\{R_1(a,b),\,R_2(a,b)\}.\\$
$(R_1\cap R_2)(a,b)=\min\{R_1(a,b),\,R_2(a,b)\}.\\$
$R_1^{-1}(a,b)=R_1(b,a).\;\;\;\forall (a,b)\in A\times B.$
\end{definition}

\begin{note}
If $R$ is a GIFSR from $\mathcal{F_\alpha}$ to $\mathcal{G_\beta}$ then $R^{-1}$ is a GIFSR from $\mathcal{G_\beta}$ to $\mathcal{F_\alpha}$.
\end{note}

\begin{proposition}
If $R_1$ and $R_2$ are GIFSR from $\mathcal{F_\alpha}$ to $\mathcal{G_\beta}$,\\
(i) ${(R_1^{-1})}^{-1}=R_1.\\$
(ii) $R_1\subseteq R_2\;\Rightarrow\,R_1^{-1}\subseteq\,R_2^{-1}.$
\end{proposition}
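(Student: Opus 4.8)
The plan is to prove both parts directly from the defining identity $R^{-1}(a,b)=R(b,a)$; no machinery beyond a double application of this equation, together with the meaning of containment, is needed. Before starting, I would record the convention (already implicit in the Note preceding the proposition) that if $R$ is a GIFSR from $\mathcal{F}_\alpha$ to $\mathcal{G}_\beta$, then $R^{-1}$ is a GIFSR from $\mathcal{G}_\beta$ to $\mathcal{F}_\alpha$, i.e. $R^{-1}:B\times A\rightarrow IF^U\times[0,1]$, so that the identity $R^{-1}(a,b)=R(b,a)$ is to be read with $(a,b)$ ranging over $B\times A$. Keeping track of which product set is the domain at each stage is the only point that requires any attention.

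For part (i), I would fix an arbitrary $(a,b)$ in the domain of $(R_1^{-1})^{-1}$ (which is again $A\times B$, since inverting twice returns us from $B\times A$ back to $A\times B$) and apply the definition twice: $(R_1^{-1})^{-1}(a,b)=R_1^{-1}(b,a)=R_1(a,b)$. As this holds for every such pair, and both sides are functions on $A\times B$ with codomain $IF^U\times[0,1]$, they coincide, so $(R_1^{-1})^{-1}=R_1$.

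For part (ii), the first step is to pin down the ordering: $R_1\subseteq R_2$ means that for every $(a,b)\in A\times B$ the value $R_1(a,b)$ is a generalised intuitionistic fuzzy soft subset of $R_2(a,b)$ in the sense of the subset definition — equivalently, the membership function of $R_1(a,b)$ is pointwise at most that of $R_2(a,b)$, the non-membership function pointwise at least, and the associated preference degree at most. Assuming this, I would fix $(b,a)\in B\times A$, the common domain of $R_1^{-1}$ and $R_2^{-1}$, and compute $R_1^{-1}(b,a)=R_1(a,b)\,\tilde{\subseteq}\,R_2(a,b)=R_2^{-1}(b,a)$, where the middle containment is the hypothesis. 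Since $(b,a)$ was arbitrary, $R_1^{-1}\subseteq R_2^{-1}$. The main (and only) obstacle is the purely notational one of aligning the domains $A\times B$ and $B\times A$; once the convention above is fixed, both statements are immediate.
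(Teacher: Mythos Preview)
Your proof is correct and follows essentially the same approach as the paper: both parts are handled by direct application of the identity $R^{-1}(a,b)=R(b,a)$, with part (ii) obtained by evaluating the hypothesis at an arbitrary pair. Your treatment is in fact slightly more careful about the domains $A\times B$ versus $B\times A$ and about the meaning of containment than the paper's version, and in (ii) you argue directly rather than routing through $(R_i^{-1})^{-1}$ as the paper does, but these are cosmetic differences.
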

\begin{proof}
Let $(a,b)\in A\times B.$ \\
(i) ${(R_1^{-1})}^{-1}(a,b)=R_1^{-1}(b,a)=R_1(a,b).$ Hence ${(R_1^{-1})}^{-1}=R_1.\\$
(ii) $R_1(a,b)\subseteq R_2(a,b)\;\Rightarrow\,{(R_1^{-1})}^{-1}(a,b)\subseteq\,{(R_2^{-1})}^{-1}(a,b)\Rightarrow\,R_1^{-1}(b,a)\subseteq R_2^{-1}(b,a).$ Hence $R_1^{-1}\subseteq R_2^{-1}.$
\end{proof}

\begin{definition}
The composition $\circ$ of two GIFSR $R_1$ and $R_2$ is defined by \[(R_1\circ R_2)(a,c)=R_1(a,b)\tilde{\cap} R_2(b,c)\] where $R_1$ is a relation from $\mathcal{F_\alpha}$ to $\mathcal{G_\beta}$ and $R_2$ is a GIFSR from $\mathcal{G_\beta}$ to $\mathcal{H_\delta}.$
\end{definition}

\begin{theorem}
Let $R_1$ be a GIFSR from $\mathcal{F_\alpha}$ to $\mathcal{G_\beta}$ and $R_2$ be a relation $\mathcal{G_\beta}$ to $\mathcal{H_\delta}$. Then $R_1\circ R_2$ is a GIFSR from $\mathcal{F_\alpha}$ to $\mathcal{H_\delta}$.
\end{theorem}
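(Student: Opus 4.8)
The plan is to check the one defining condition of a GIFSR straight from the definitions. Write $C\subseteq E$ for the parameter set of $\mathcal{H}_\delta$; we must show that $(R_1\circ R_2)(a,c)\,\tilde{\subseteq}\,\mathcal{F}_\alpha(a)\,\tilde{\cap}\,\mathcal{H}_\delta(c)$ for every $(a,c)\in A\times C$. By the definition of the composition $\circ$, $(R_1\circ R_2)(a,c)=R_1(a,b)\,\tilde{\cap}\,R_2(b,c)$ for the mediating parameter $b\in B$, so the claim reduces to comparing a pointwise intersection of two members of $IF^U\times[0,1]$ with $\mathcal{F}_\alpha(a)\,\tilde{\cap}\,\mathcal{H}_\delta(c)$, where $\tilde{\cap}$ and $\tilde{\subseteq}$ on pairs are read coordinatewise (membership, non-membership, preference) exactly as in the defining formulas.

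First I would isolate three small facts. (a) The relation $\tilde{\subseteq}$ is transitive, which is immediate from transitivity of $\le$ on $[0,1]$ in each of the three coordinates. (b) The operation $\tilde{\cap}$ is monotone: if $P\,\tilde{\subseteq}\,P'$ and $Q\,\tilde{\subseteq}\,Q'$, then $P\,\tilde{\cap}\,Q\,\tilde{\subseteq}\,P'\,\tilde{\cap}\,Q'$; this is exactly condition (iv) in the definitions of $t$-norm and $t$-conorm applied coordinatewise. (c) Contraction: $P\,\tilde{\cap}\,Q\,\tilde{\subseteq}\,P$ for any pairs $P,Q$; for the membership and preference coordinates this uses $x\ast y\le x\ast 1=x$, and for the non-membership coordinate $x\diamond y\ge x\diamond 0=x$, both together with monotonicity.

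Then, feeding the hypotheses $R_1(a,b)\,\tilde{\subseteq}\,\mathcal{F}_\alpha(a)\,\tilde{\cap}\,\mathcal{G}_\beta(b)$ and $R_2(b,c)\,\tilde{\subseteq}\,\mathcal{G}_\beta(b)\,\tilde{\cap}\,\mathcal{H}_\delta(c)$ into (b), I get $(R_1\circ R_2)(a,c)\,\tilde{\subseteq}\,(\mathcal{F}_\alpha(a)\,\tilde{\cap}\,\mathcal{G}_\beta(b))\,\tilde{\cap}\,(\mathcal{G}_\beta(b)\,\tilde{\cap}\,\mathcal{H}_\delta(c))$. Applying (c) to each factor, $\mathcal{F}_\alpha(a)\,\tilde{\cap}\,\mathcal{G}_\beta(b)\,\tilde{\subseteq}\,\mathcal{F}_\alpha(a)$ and (using commutativity of $\ast,\diamond$) $\mathcal{G}_\beta(b)\,\tilde{\cap}\,\mathcal{H}_\delta(c)\,\tilde{\subseteq}\,\mathcal{H}_\delta(c)$, so a further use of (b) gives $(\mathcal{F}_\alpha(a)\,\tilde{\cap}\,\mathcal{G}_\beta(b))\,\tilde{\cap}\,(\mathcal{G}_\beta(b)\,\tilde{\cap}\,\mathcal{H}_\delta(c))\,\tilde{\subseteq}\,\mathcal{F}_\alpha(a)\,\tilde{\cap}\,\mathcal{H}_\delta(c)$. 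Chaining the two inclusions by transitivity (a) yields $(R_1\circ R_2)(a,c)\,\tilde{\subseteq}\,\mathcal{F}_\alpha(a)\,\tilde{\cap}\,\mathcal{H}_\delta(c)$, which is precisely the requirement for $R_1\circ R_2$ to be a GIFSR from $\mathcal{F}_\alpha$ to $\mathcal{H}_\delta$.

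The step I expect to be the real obstacle — modest as it is — is fact (c). Because the paper works with an arbitrary continuous $t$-norm $\ast$ and $t$-conorm $\diamond$ rather than $\min$ and $\max$, one cannot invoke idempotence; one must argue through the boundary laws $x\ast 1=x$ and $x\diamond 0=x$ together with monotonicity, and one must remember to treat the degree-of-preference coordinate $\delta(e)=\alpha(e)\ast\beta(e)$ on the same footing as the membership coordinate. Once (a), (b), (c) are in hand, unwinding $\circ$ and assembling the inclusions is routine bookkeeping.
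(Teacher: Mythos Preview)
Your proposal is correct and follows essentially the same route as the paper: both arguments reduce the containment $(R_1\circ R_2)(a,c)\,\tilde{\subseteq}\,\mathcal{F}_\alpha(a)\,\tilde{\cap}\,\mathcal{H}_\delta(c)$ to the monotonicity and boundary axioms $x\ast 1=x$, $x\diamond 0=x$ of the $t$-norm and $t$-conorm, applied coordinatewise to membership, non-membership, and preference. The only difference is packaging: the paper writes out the membership, non-membership and preference inequalities explicitly (passing through the three-fold product $\mu_{\mathcal{F}(a)}\ast\mu_{\mathcal{G}(b)}\ast\mu_{\mathcal{H}(c)}$), whereas you abstract the same computations into the lemmas (a)--(c) and then chain them; your organisation also makes the use of monotonicity for the step from $R_i$ to its bound $\mathcal{F}_\alpha(a)\tilde{\cap}\mathcal{G}_\beta(b)$ explicit, which the paper leaves implicit.
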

\begin{proof}
By definition\\
$R_1(a,b)\subseteq \mathcal{F_\alpha}(a)\tilde{\cap}\mathcal{G_\beta}(b)=
\{\left(\{x,\,\mu_{_{\mathcal{F}(a)}}(x)\ast \mu_{_{\mathcal{G}(b)}}(x), \,\nu_{_{\mathcal{F}(a)}}(x)\diamond \nu_{_{\mathcal{G}(b)}}(x)\},\;\alpha(a)\ast\beta(b)\right)\,:\;x\in U \},\;\forall (a,b)\in A\times B .\\$
$R_2(b,c)\subseteq \mathcal{G_\beta}(b)\tilde{\cap}\mathcal{H_\delta}(c)=\{\left(\{x,\,\mu_{_{\mathcal{G}(b)}}(x)\ast \mu_{_{\mathcal{H}(c)}}(x), \,\nu_{_{\mathcal{G}(b)}}(x)\diamond \nu_{_{\mathcal{H}(c)}}(x)\},\;\beta(b)\ast\delta(c)\right)\,:\;x\in U \},\;\forall (b,c)\in B\times C.$
Therefore,\\
$(R_1\circ R_2)(a,c)=R_1(a,b)\tilde{\cap} R_2(b,c)\\=\{(\{x,\;(\mu_{_{\mathcal{F}(a)}}(x)\ast \mu_{_{\mathcal{G}(b)}}(x))\;\ast\;(\mu_{_{\mathcal{G}(b)}}(x)\ast \mu_{_{\mathcal{H}(c)}}(x)),
\;(\nu_{_{\mathcal{F}(a)}}(x)\diamond \nu_{_{\mathcal{G}(b)}}(x))\;\diamond\;(\nu_{_{\mathcal{G}(b)}}(x)\diamond \nu_{_{\mathcal{H}(c)}}(x))\},
\;(\alpha(a)\ast\beta(b))\,\ast\,(\beta(b)\ast\delta(c)))\,:\;x\in U \},\;\forall (a,b,c)\in A\times B\times C\,.\;$
Now\\ $(\mu_{_{\mathcal{F}(a)}}(x)\ast \mu_{_{\mathcal{G}(b)}}(x))\;\ast\;(\mu_{_{\mathcal{G}(b)}}(x)\ast \mu_{_{\mathcal{H}(c)}}(x))\\= \mu_{_{\mathcal{F}(a)}}(x)\ast \mu_{_{\mathcal{G}(b)}}(x)\ast \mu_{_{\mathcal{H}(c)}}(x)\\ \leq \mu_{_{\mathcal{F}(a)}}(x)\ast\, 1\,\ast \mu_{_{\mathcal{H}(c)}}(x)\\= \mu_{_{\mathcal{F}(a)}}(x)\ast \mu_{_{\mathcal{H}(c)}}(x)\,\;$ and \\
$(\nu_{_{\mathcal{F}(a)}}(x)\diamond \nu_{_{\mathcal{G}(b)}}(x))\;\diamond\;(\nu_{_{\mathcal{G}(b)}}(x)\diamond \nu_{_{\mathcal{H}(c)}}(x))\\= \nu_{_{\mathcal{F}(a)}}(x)\diamond \nu_{_{\mathcal{G}(b)}}(x)\diamond \nu_{_{\mathcal{H}(c)}}(x)\\ \geq \nu_{_{\mathcal{F}(a)}}(x)\diamond\, 0\,\diamond \nu_{_{\mathcal{H}(c)}}(x)\\= \nu_{_{\mathcal{F}(a)}}(x)\diamond \nu_{_{\mathcal{H}(c)}}(x).\;$ \\Also,
$(\alpha(a)\ast\beta(b))\,\ast\,(\beta(b)\ast\delta(c))=\alpha(a)\ast\beta(b)\ast\delta(c)\leq \alpha(a)\ast \,1\,\ast\delta(c)=\alpha(a)\ast\delta(c).\\$
Hence $R_1(a,b)\tilde{\cap} R_2(b,c)\subseteq \mathcal{F_\alpha}\tilde{\cap}\mathcal{H_\delta}.\\$
Thus $\;R_1\circ R_2$ is a GIFSR from $\mathcal{F_\alpha}$ to $\mathcal{H_\delta}$.
\end{proof}

\begin{proposition}
$R_1\circ (R_2\cup R_3)=(R_1\circ R_2)\cup(R_1\circ R_3)$ where $R_1$ is a GIFSR from $\mathcal{F_\alpha}$ to $\mathcal{G_\beta}$ and $R_2,\,R_3$ are GIFSR from $\mathcal{G_\beta}$ to $\mathcal{H_\delta}.$
\end{proposition}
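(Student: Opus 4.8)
The plan is to reduce the identity to a pointwise (parameter-by-parameter, element-by-element) verification and then to invoke a small distributivity fact about t-norms and t-conorms that follows from their monotonicity. Fix $(a,c)$ together with the intermediate parameter $b$ occurring in the definition of the composition $\circ$. By the definition of $\cup$ on GIFSRs, $(R_2\cup R_3)(b,c)$ is the generalised intuitionistic fuzzy soft value whose membership function is $\max\{\mu_{_{R_2(b,c)}},\,\mu_{_{R_3(b,c)}}\}$, whose non-membership function is $\min\{\nu_{_{R_2(b,c)}},\,\nu_{_{R_3(b,c)}}\}$, and whose preference degree is the $\max$ of the two preference degrees. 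Then, using the definition of $\circ$ (which applies $\tilde{\cap}$, hence $\ast$ on memberships, $\diamond$ on non-memberships, and $\ast$ on preference degrees), I would write out all three components of $\bigl(R_1\circ(R_2\cup R_3)\bigr)(a,c)$ explicitly.

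The key step is the following auxiliary observation, which I would state and prove first: if $\ast$ is a continuous t-norm then $p\ast\max\{q,r\}=\max\{p\ast q,\,p\ast r\}$ for all $p,q,r\in[0,1]$, and dually, if $\diamond$ is a continuous t-conorm then $p\diamond\min\{q,r\}=\min\{p\diamond q,\,p\diamond r\}$. Both are immediate from condition (iv) (monotonicity) in the definitions of t-norm and t-conorm: whichever of $q,r$ is the larger is carried by $p\ast(\cdot)$ to the larger of $p\ast q,\,p\ast r$, so the right-hand $\max$ equals $p\ast$ applied to $\max\{q,r\}$; the t-conorm case is symmetric with $\min$. Note that, unlike the full distributive laws mentioned in the Remark, no restriction to $\min$/$\max$ for $\ast,\diamond$ is needed here, since only one of the two operations on each side is a lattice operation.

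Applying this observation with $p=\mu_{_{R_1(a,b)}}(x)$, $q=\mu_{_{R_2(b,c)}}(x)$, $r=\mu_{_{R_3(b,c)}}(x)$ shows that the membership function of $\bigl(R_1\circ(R_2\cup R_3)\bigr)(a,c)$ equals $\max\{\mu_{_{R_1(a,b)}}(x)\ast\mu_{_{R_2(b,c)}}(x),\ \mu_{_{R_1(a,b)}}(x)\ast\mu_{_{R_3(b,c)}}(x)\}$, which is precisely the membership function of $\bigl((R_1\circ R_2)\cup(R_1\circ R_3)\bigr)(a,c)$ obtained by unfolding the definitions of $\circ$ and $\cup$. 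The same argument with $\diamond$ and $\min$ handles the non-membership functions, and the t-norm case again handles the preference degrees $\alpha(a)\ast(\cdot)$. Since all three components agree for every $x\in U$ and every $(a,c)$, the two GIFSRs coincide.

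I do not anticipate a genuine obstacle; the only care needed is bookkeeping — making sure the interpretation of $\max$ on generalised intuitionistic fuzzy soft values (maximum on memberships, minimum on non-memberships, maximum on preference degrees) is used consistently on both sides, and that the free intermediate parameter $b$ is treated identically in $R_1\circ(R_2\cup R_3)$ and in each of $R_1\circ R_2$ and $R_1\circ R_3$, so that the comparison is between corresponding terms.
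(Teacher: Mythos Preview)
Your argument is correct and follows the same route as the paper: both reduce the identity to the pointwise distributivity $R_1(a,b)\,\tilde{\cap}\,\max\{R_2(b,c),R_3(b,c)\}=\max\{R_1(a,b)\,\tilde{\cap}\,R_2(b,c),\,R_1(a,b)\,\tilde{\cap}\,R_3(b,c)\}$. The paper simply asserts this step, whereas you supply the missing justification via the monotonicity axiom (iv) of $\ast$ and $\diamond$, correctly observing that no restriction to $\ast=\min$, $\diamond=\max$ is needed since only one operand is a lattice operation.
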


\begin{proof}
 Let $a\in A,\;b\in B,\;c\in C.\\$
$R_1(a,b)\circ \left(R_2(b,c)\cup R_3(b,c)\right)\\=R_1(a,b)\tilde{\cap}\; \max\{R_2(b,c),\; R_3(b,c)\} \\= \max\{R_1(a,b)\tilde{\cap} \,R_2(b,c),\; R_1(a,b)\tilde{\cap}\,R_3(b,c)\}\\=\max\{(R_1\circ R_2)(a,c),\; (R_1 \circ R_3)(a,c)\}\\=(R_1\circ R_2)(a,c)\cup (R_1\circ R_3)(a,c).\\$ So, $R_1\circ (R_2\cup R_3)=(R_1\circ R_2)\cup(R_1\circ R_3)$.
\end{proof}

\begin{proposition}
$(R_1\circ R_2)^{-1}=R_2^{-1}\circ R_1^{-1}$ where $R_1$ is a IFSR from $\mathcal{F_\alpha}$ to $\mathcal{G_\beta}$ and $R_2$ are GIFSR from $\mathcal{G_\beta}$ to $\mathcal{H_\delta}.$
\end{proposition}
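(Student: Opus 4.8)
The plan is to verify the identity pointwise: I fix an arbitrary pair in the common domain and show that both sides return the same element of $IF^U\times[0,1]$. By the preceding theorem, $R_1\circ R_2$ is a GIFSR from $\mathcal{F_\alpha}$ to $\mathcal{H_\delta}$, so $(R_1\circ R_2)^{-1}$ is a GIFSR from $\mathcal{H_\delta}$ to $\mathcal{F_\alpha}$; likewise $R_1^{-1}$ is a GIFSR from $\mathcal{G_\beta}$ to $\mathcal{F_\alpha}$ and $R_2^{-1}$ a GIFSR from $\mathcal{H_\delta}$ to $\mathcal{G_\beta}$, so $R_2^{-1}\circ R_1^{-1}$ also runs from $\mathcal{H_\delta}$ to $\mathcal{F_\alpha}$. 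Hence both relations are defined on $C\times A$, and it suffices to evaluate them at a generic $(c,a)\in C\times A$.

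First I would unwind the left-hand side, applying the definition of the inverse and then the definition of composition:
$(R_1\circ R_2)^{-1}(c,a)=(R_1\circ R_2)(a,c)=R_1(a,b)\tilde{\cap}R_2(b,c)$, for the intermediate parameter $b\in B$ used in forming the composite. Next I would unwind the right-hand side the same way: $(R_2^{-1}\circ R_1^{-1})(c,a)=R_2^{-1}(c,b)\tilde{\cap}R_1^{-1}(b,a)=R_2(b,c)\tilde{\cap}R_1(a,b)$, using $R_2^{-1}(c,b)=R_2(b,c)$ and $R_1^{-1}(b,a)=R_1(a,b)$. Comparing the two results, the left side is $R_1(a,b)\tilde{\cap}R_2(b,c)$ and the right side is $R_2(b,c)\tilde{\cap}R_1(a,b)$, which coincide by commutativity of $\tilde{\cap}$ on generalised intuitionistic fuzzy soft sets, i.e.\ part (ii) of the theorem on the algebra of $\tilde{\cup},\tilde{\cap}$, itself a consequence of the commutativity of the $t$-norm $\ast$ and the $t$-conorm $\diamond$. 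Since $(c,a)$ was arbitrary, the two relations agree everywhere, so $(R_1\circ R_2)^{-1}=R_2^{-1}\circ R_1^{-1}$.

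The only point needing a little care — and the closest thing to an obstacle — is the bookkeeping of the intermediate parameter $b$: the stated definition of composition fixes such a $b$, and one must observe that the inverse operation leaves the index set $B$ untouched, so the same $b$ appears on both sides; once this is noted the argument collapses to a single application of commutativity of intersection, exactly as in the proof of $(R_1^{-1})^{-1}=R_1$.
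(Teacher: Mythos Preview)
Your argument is correct and follows essentially the same route as the paper's proof: fix $(c,a)$, unwind $(R_1\circ R_2)^{-1}(c,a)$ to $R_1(a,b)\tilde{\cap}R_2(b,c)$, swap by commutativity of $\tilde{\cap}$, and rewrite as $(R_2^{-1}\circ R_1^{-1})(c,a)$. The paper presents this as a single chain of equalities without the additional commentary on domains and the intermediate parameter, but the logical content is identical.
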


\begin{proof}
 Let $a\in A,\;b\in B,\;c\in C.\\$
$(R_1 \circ R_2)^{-1}(c,a)=(R_1 \circ R_2)(a,c)=R_1(a,b) \tilde{\cap}\,R_2(b,c)=R_2(b,c)\tilde{\cap}\,R_1(a,b)=R_2^{-1}(c,b)\tilde{\cap}\,
 R_1^{-1}(b,a)=(R_2^{-1}\circ R_1^{-1})(c,a).\;$ Hence $(R_1\circ R_2)^{-1}=R_2^{-1}\circ R_1^{-1}\,.$
\end{proof}
\[\]
\section{\bf An application of generalised intuitionistic fuzzy soft set in decision making }
There are several applications of generalised intuitionistic fuzzy soft set theory in several directions. Here we present application of generalised intuitionistic fuzzy soft set in a decision making problem. Suppose there are six boys in the universe $U$ as $U=\{b_1,b_2,b_3,b_4,b_5,b_6\}$ and the parameter set $E=\{e_1,e_2,e_3,e_4,e_5,e_6,e_7,e_8,e_9\}$, where each $e_i,\;1\leq i\leq 9$ indicates a specific criteria for boys.\\
$e_1$ stands for "education qualification".\\
$e_2$ stands for "hard working".\\
$e_3$ stands for "responsible".\\
$e_4$ stands for "government employee".\\
$e_5$ stands for "non-government employee".\\
$e_6$ stands for "businessman".\\
$e_7$ stands for "family status".\\
$e_8$ stands for "spiritual and ideal".\\
$e_9$ stands for "handsome".\\
Suppose a woman Miss. Y wishes to marry a man on the basis of her wishing parameter among the listed above. Our aim is to find out the most appropriate partner for Miss. Y.\\
Suppose the wishing parameters of Miss. Y be $A\subseteq E$ where $A=\{e_3,e_4,e_7,e_{9}\}\,.\\$
Let $\alpha:A\rightarrow [0,1]$ be a fuzzy subset of $A$, defined by Miss. Y as follows:\\
$\alpha(e_3)=0.1,\;\;\alpha(e_4)=0.5,\;\;\alpha(e_7)=0.4,\;\;\alpha(e_{9})=0.3\;.\\$
Consider the generalised intuitionistic fuzzy soft sets $\mathcal{F}_\alpha$ as a collection of intuitionistic fuzzy approximation as below:

$\mathcal{F}_\alpha (e_3)=(\{(b_1,0.3,0.5),(b_2,0.5,0.3),(b_3,0.3,0.4),(b_4,0.6,0.3),(b_5,0.4,0.3),\\(b_6,0.2,0.4)\}, \;0.1\,)\\$
$\mathcal{F}_\alpha (e_4)=(\{(b_1,0,0.8),(b_2,1,0),(b_3,0.9,0.02),(b_4,0,0.12),(b_5,0,0.2),\\(b_6,0,0.03)\},\; 0.5\,)\\$
$\mathcal{F}_\alpha (e_7)=(\{(b_1,0.6,0.3),(b_2,0.5,0.4), (b_3,0.6,0.35),(b_4,0.7,0.2),(b_5,0.7,0.28),\\(b_6,0.8,0.02)\},\; 0.4\,)\\$
$\mathcal{F}_\alpha (e_{9})=(\{(b_1,0.5,0.3),(b_2,0.4,0.3), (b_3,0.6,0.38),(b_4,0.5,0.3),(b_5,0.5,0.2),\\(b_6,0.7,0.19)\},\; 0.3\,)\\$

Now we introduce the following operations:\\
(i) for membership function: $\mu^{\prime}_{b_r}(e_i)=a_i+b_i-a_i b_i,\;\;\;\;$ where $\;a_i=\mu_{b_r}(e_i)$ and $b_i=\alpha(e_i)$ for $r=1,2,3,4,5,6$.\\
(ii) for non-membership function: $\nu\,^{\prime}_{b_r}(e_i)=c_i d_i,\;\;\;\;$ where $\;c_i=\nu_{b_r}(e_i)$ and $d_i=\alpha(e_i)$ for $r=1,2,3,4,5,6$.\\
Actually we have taken these two operations to ascend the membership value and descend the non-membership value of $\mathcal{F}_\alpha (e_i)$ on the basis of the degree of preference of Miss. Y. Then the generalised intuitionistic fuzzy soft set $\mathcal{F}_\alpha (e_i)$ reduced to an intuitionistic fuzzy soft set $\mathcal{F}^{\prime} (e_i)$ given as follows:\\
$\mathcal{F}^{\prime}(e_3)=\{(b_1,0.37,0.05),(b_2,0.55,0.03),(b_3,0.37,0.04),(b_4,0.64,0.03),\\(b_5,0.46,0.03),(b_6,0.28,0.04)\}\\$
$\mathcal{F}^{\prime}(e_4)=\{(b_1,0.5,0.4),(b_2,1,0),(b_3,0.95,0.01),(b_4,0.5,0.06),\\(b_5,0.5,0.1),(b_6,0.5,0.15)\}\\$
$\mathcal{F}^{\prime} (e_7)=\{(b_1,0.76,0.12),(b_2,0.7,0.16),(b_3,0.76,0.1),(b_4,0.82,0.08),\\(b_5,0.82,0.112),(b_6,0.88,0.008)\}\\$
$\mathcal{F}^{\prime}(e_{9})=\{(b_1,0.65,0.09),(b_2,0.58,0.09),(b_3,0.78,0.114),(b_4,0.65,0.09),\\(b_5,0.65,0.06),(b_6,0.79,0.057)\}.$

\begin{definition}({\bf Comparison table})
It is a square table in which number of rows and number of column are equal and both are labeled by the object name of the universe such as $b_1,b_2,\cdots,b_n$ and the entries are $c_{ij},\;\;$ where \\$c_{ij}=$ the number of parameters for which the value of $b_i$ exceeds or equal to the value of $b_j$.
\end{definition}

{\bf Algorithm:\\}
(i) Input the set $A\subseteq E$ of choice of parameters of Miss. Y.\\
(ii) Consider the reduced intuitionistic fuzzy soft set in tabular form.\\
(iii) Compute the comparison table of membership function and non-membership function.\\
(iv) Compute the membership score and non-membership score.\\
(v) Compute the final score by subtracting non-membership score from membership score.\\
(vi) Find the maximum score, if it occurs in i-th row then Miss. Y will marry to $b_i$.

\begin{table}
[tbh]
\begin{center}
\begin{tabular}{|r||*{3}{l@{}l|}r|r|} \hline
\multicolumn{1}{|c|}{$\cdot$} & \multicolumn{2}{c|}{$e_3$} &
\multicolumn{2}{c|}{$e_4$} &\multicolumn{2}{c|}{$e_7$} &
\multicolumn{1}{c|}{$e_{9}$} \\
\hline\hline
$b_1$ & 0.37 &  & 0.5 &  & 0.76 &  & 0.65 \\
$b_2$ & 0.55 &  & 1.0 &  & 0.7 &  & 0.58  \\
$b_3$ & 0.37 &  & 0.95 &  & 0.76 &  & 0.78  \\
$b_4$ & 0.64 &  & 0.5 &  & 0.82 &  & 0.65  \\
$b_5$ & 0.46 &  & 0.5 &  & 0.82 &  & 0.65  \\
$b_6$ & 0.28 &  & 0.5 &  & 0.88 &  & 0.79  \\\hline
\end{tabular}
\end{center}
\caption{Tabular representation of membership function}
\end{table}

\begin{table}
[tbh]
\begin{center}
\begin{tabular}{|r||*{5}{l@{}l|}r|r|} \hline
\multicolumn{1}{|c|}{$\cdot$} & \multicolumn{2}{c|}{$b_1$} &
\multicolumn{2}{c|}{$b_2$}    &\multicolumn{2}{c|}{$b_3$} &
\multicolumn{2}{c|}{$b_4$}    &\multicolumn{2}{c|}{$b_5$} &
\multicolumn{1}{c|}{$b_6$} \\
\hline\hline
$b_1$ & 4 && 2 && 2 && 2 && 2 && 2  \\
$b_2$ & 2 && 4 && 2 && 1 && 2 && 2  \\
$b_3$ & 4 && 2 && 4 && 2 && 2 && 2  \\
$b_4$ & 4 && 3 && 2 && 4 && 4 && 2  \\
$b_5$ & 4 && 2 && 2 && 3 && 4 && 2  \\
$b_6$ & 3 && 2 && 2 && 3 && 3 && 4 \\ \hline
\end{tabular}
\end{center}
\caption{Comparison table of the above table}
\end{table}

\begin{table}
[tbh]
\begin{center}
\begin{tabular}{|r||*{3}{l@{}l|}r|r|r|} \hline
\multicolumn{1}{|c|}{$\cdot$} & \multicolumn{2}{c|}{Row sum(a)} &
\multicolumn{2}{c|}{Column sum(b)} &\multicolumn{2}{c|}{Membership score(a-b)}\\
\hline\hline
$b_1$ & 14 && 21 &&  -7&\\
$b_2$ & 13 && 15 && -2 &\\
$b_3$ & 16 && 14 && 2  &\\
$b_4$ & 19 && 15 && 4  &\\
$b_5$ & 17 && 17 && 0  &\\
$b_6$ & 17 && 14 && 3  &\\\hline
\end{tabular}
\end{center}
\caption{Membership score table}
\end{table}

\begin{table}
[tbh]
\begin{center}
\begin{tabular}{|r||*{3}{l@{}l|}r|r|} \hline
\multicolumn{1}{|c|}{$\cdot$} & \multicolumn{2}{c|}{$e_3$} &
\multicolumn{2}{c|}{$e_4$} &\multicolumn{2}{c|}{$e_7$} &
\multicolumn{1}{c|}{$e_{9}$} \\
\hline\hline
$b_1$ & 0.05 &  & 0.40 &  & 0.12 &  & 0.09 \\
$b_2$ & 0.03 &  & 0.00 &  & 0.16 &  & 0.09  \\
$b_3$ & 0.04 &  & 0.01 &  & 0.10 &  & 0.114  \\
$b_4$ & 0.03 &  & 0.06 &  & 0.08 &  & 0.09 \\
$b_5$ & 0.03 &  & 0.10 &  & 0.112 &  & 0.06  \\
$b_6$ & 0.04 &  & 0.015 &  & 0.008 &  & 0.057  \\\hline
\end{tabular}
\end{center}
\caption{Tabular representation of non-membership function}
\end{table}

\begin{table}
[tbh]
\begin{center}
\begin{tabular}{|r||*{5}{l@{}l|}r|r|} \hline
\multicolumn{1}{|c|}{$\cdot$} & \multicolumn{2}{c|}{$b_1$} &
\multicolumn{2}{c|}{$b_2$}    &\multicolumn{2}{c|}{$b_3$} &
\multicolumn{2}{c|}{$b_4$}    &\multicolumn{2}{c|}{$b_5$} &
\multicolumn{1}{c|}{$b_6$} \\
\hline\hline
$b_1$ & 4 && 3 && 3 && 4 && 4 && 4  \\
$b_2$ & 2 && 4 && 1 && 3 && 2 && 2  \\
$b_3$ & 1 && 3 && 4 && 3 && 2 && 3  \\
$b_4$ & 1 && 3 && 1 && 4 && 2 && 3  \\
$b_5$ & 0 && 2 && 2 && 3 && 4 && 3  \\
$b_6$ & 0 && 2 && 2 && 1 && 1 && 4  \\\hline
\end{tabular}
\end{center}
\caption{Comparison table of the above table}
\end{table}

\begin{table}
[tbh]
\begin{center}
\begin{tabular}{|r||*{3}{l@{}l|}r|r|r|} \hline
\multicolumn{1}{|c|}{$\cdot$} & \multicolumn{2}{c|}{Row sum(c)} &
\multicolumn{2}{c|}{Column sum(d)} &\multicolumn{2}{c|}{Non-membership score(c-d)}\\
\hline\hline
$b_1$ & 22 && 8 &&  14&\\
$b_2$ & 14 && 17 && -3 &\\
$b_3$ & 16 && 13 && 3  &\\
$b_4$ & 14 && 18 && -4  &\\
$b_5$ & 14 && 15 && -1 &\\
$b_6$ & 10 && 19 && -9  &\\\hline
\end{tabular}
\end{center}
\caption{Non-membership score table}
\end{table}

\begin{table}
[tbh]
\begin{center}
\begin{tabular}{|r||*{3}{l@{}l|}r|r|r|} \hline
\multicolumn{1}{|c|}{$\cdot$} & \multicolumn{2}{c|}{Membership score(m)} &
\multicolumn{2}{c|}{Non-membership score(n)} &\multicolumn{2}{c|}{Finale score(m-n)}\\
\hline\hline
$b_1$ & -7 && 14 &&  -21&\\
$b_2$ & -2 && -3 && 1 &\\
$b_3$ & 2 && 3 && -1  &\\
$b_4$ & 4 && -4 && 8  &\\
$b_5$ & 0 && -1 && 1  &\\
$b_6$ & 3 && -9 && 12  &\\\hline
\end{tabular}
\end{center}
\caption{Final score table}
\end{table}
\[\]\[\]\[\]\[\]\[\]\[\]\[\]\[\]\[\]\[\]
Clearly the maximum score is $12$ scored by the man $b_6$.\\
{\bf Decision:} Miss. Y will marry to $b_6$. In case, if she does not want to marry $b_6$ due to certain reasons, her second choice will be $b_4$.
\section{\bf Conclusion}
In this paper, we have introduced the weighted intuitionistic fuzzy soft sets and soft relations with respect to preference. An application of this theory to solve a socialistic problem in a different approach has been investigated. It is expected that the approach will be useful to handle several realistic uncertain problems and give more perfect results.

{\[\]\bf Bivas Dinda}\\
Department of Mathematics, \\
Mahishamuri Ramkrishna Vidyapith,\\
P.O.-Nowpara, Amta, Howrah,\\
West Bengal, India.\\
e-mail: bvsdinda@gmail.com\\\\
 {\bf Tuhin Bera}\\
Department of Mathematics,\\
Boror Siksha Satra High School,\\
West Bengal, India. \\
e-mail: tuhinor@gmail.com\\\\
 {\bf T.K.Samanta}\\
Department of Mathematics,\\
 Uluberia College,\\
 West Bengal, India.\\
e-mail: mumpu$_{-}$tapas5@yahoo.co.in
\end{document}